\documentclass[letterpaper, 10 pt, conference]{ieeeconf}  
\usepackage{graphicx}
\usepackage{epstopdf}

\usepackage{relsize}
\usepackage{amsmath,amssymb,amsfonts,mathtools}
\usepackage{nicematrix, nicefrac}
\usepackage{algorithmic}
\usepackage{textcomp}
\usepackage{xcolor}

\usepackage{booktabs}
\usepackage{graphicx}
\usepackage{epsfig} 
\usepackage{times}  
\usepackage{siunitx}
\NewCommandCopy\OldSI\SI
\RenewDocumentCommand\SI{ O{} m m }{%
  \nobreak%
  \OldSI[#1]{#2}{#3}%
  \nobreak%
}

\usepackage{tikz}
\usepackage{xstring}
\usepackage{forloop}
\usepackage{colortbl}
\usepackage{soul}
\usepackage{array}

\let\labelindent\relax  
\usepackage{enumitem}

\usepackage{multirow} 
\usepackage{multicol}
\usepackage{arydshln}
\usepackage{pgfgantt}

\usepackage{textcomp}
\usepackage{rotating}
\usepackage{float}
\usepackage{algorithmic}
\usepackage{algorithm}
\usepackage[edges]{forest}

\newcommand{\dd}{\mathrm{d}}

\DeclareMathOperator{\grad}{grad}
\DeclareMathOperator{\Hess}{Hess}

\newcommand{\norm}[1]{\left\|#1\right\|}

\newcommand{\R}{\mathbb R}

\DeclareMathOperator{\rank}{rank}

\makeatletter
  \renewcommand*\env@matrix[1][*\c@MaxMatrixCols c]{%
	\hskip -\arraycolsep
	\let\@ifnextchar\new@ifnextchar
	\array{#1}}
\newcommand*\bigcdot{\mathpalette\bigcdot@{1.5}}
\newcommand*\bigcdot@[2]{\mathbin{\vcenter{\hbox{\scalebox{#2}{$\m@th#1\cdot$}}}}}
\makeatother

\newcounter{excounter}[section]

\newcounter{rmcounter}

\newcounter{dfcounter}

\newcounter{thmcounter}

\newcounter{corcounter}[thmcounter]

\newcounter{prcounter}

\newcounter{clcounter}

\usepackage{xurl}
\usepackage{import}

\IEEEoverridecommandlockouts                              
\usepackage{amsmath, amssymb}
\usepackage{comment}

\usepackage{amsthm}

\usepackage{tikz-cd}
\tikzcdset{
  arrow style=tikz,
  diagrams={>={Stealth[length=4pt,width=3.8pt]}}
}

\newtheorem{problemEnv}{Problem}
\newenvironment{problem}[1][]{\begin{problemEnv}}{
\end{problemEnv}}

\newtheorem{remarkEnv}{Remark}
\newenvironment{remark}[1][]{\begin{remarkEnv}}{
\end{remarkEnv}}

\newtheorem{theoremEnv}{Theorem}
\newtheorem{lemmaEnv}[theoremEnv]{Lemma}
\newtheorem{propositionEnv}[theoremEnv]{Proposition}
\newtheorem{corollaryEnv}[theoremEnv]{Corollary}

\newenvironment{theorem}[1][]{\begin{theoremEnv}}
{\hfill$ $\end{theoremEnv}}

\newenvironment{lemma}[1][]{\begin{lemmaEnv}}
{\hfill$ $\end{lemmaEnv}}

\newenvironment{proposition}[1][]{\begin{propositionEnv}}
{\hfill$ $\end{propositionEnv}}

\newtheorem{definitionEnv}{Definition}

\usepackage{url}
\usepackage{subcaption}
\usepackage{epstopdf}
\usepackage{xcolor}

\usepackage{xcolor}

\usepackage{algorithmic,algorithm}

\usepackage{cite}
\makeatletter
\let\NAT@parse\undefined
\makeatother
\usepackage[colorlinks=true,allcolors=blue]{hyperref}

\title{\LARGE \bf
Modeling and Control of a Unicycle Robot on Manifolds
}

\author{Adeel Akhtar$^{1}$ and Mohamed Al Lawati$^{2}$
\thanks{This work is supported by NJIT's startup funds.}
\thanks{$^{1}$ Department of Mechanical and Industrial Engineering, New Jersey Institute of Technology, NJ, USA. {\tt\small adeel.akhtar@njit.edu}}
\thanks{$^{2}$ Department of Mechanical and Industrial Engineering, Sultan Qaboos University, Muscat, Oman. {\tt\small mlawati@squ.edu.om}}
}

\begin{document}

\maketitle
\thispagestyle{empty}
\pagestyle{empty}


\begin{abstract}
This paper studies path-following control for a kinematic unicycle evolving
on a smooth manifold embedded in \(\mathbb{R}^3\). The objective is to
stabilize a one-dimensional path on the manifold without imposing a temporal
parameterization, thereby distinguishing the problem from trajectory
tracking. Working in a geometric framework, we propose two feedback laws:
a static controller for prescribed nonzero translational speed and a dynamic
controller based on a dynamic extension of the translational speed. By
lifting the path to the unit tangent bundle of the surface, we characterize
the corresponding path-following manifold and establish its control
invariance. We then show that the proposed controllers render the
path-following manifold locally exponentially stable under suitable
regularity conditions. A numerical simulation illustrates the proposed controller. Code and animations are publicly available
at~{\footnotesize\url{https://gradslab.github.io/UnicycleOnManifolds/}}.
\end{abstract}

\section{Introduction}


Feedback control of kinematic unicycles is a classical topic in nonlinear
control, with extensive literature on stabilization and trajectory
tracking; see~\cite{Bro83,Ast96,CheJia15,RehReyVan19} and the references
therein. While stabilization concerns regulation to a target configuration,
trajectory tracking prescribes the motion through a
time-parameterized reference trajectory~\cite{tariq2026relaxed,RodVel22}.

In many applications, however, the desired motion is specified only as a
geometric path, without an associated temporal parameterization; see~\cite{AkhWasNie2013Journal} and the references therein. This leads
to the path-following problem, where the objective is to stabilize the path without prescribing the temporal evolution along it. Path following for kinematic
unicycles and related nonlinear systems has received considerable attention in planar settings~\cite{ElhMag08,SkjKok2004,NieMag04}. In this setting, the
target is treated as a geometric subset of the state space, and the
closed-loop objective naturally involves both stabilization of the path and
invariance of the induced motion on it~\cite{AkhNie2011,NieMag04}.

In contrast, feedback control of a kinematic unicycle evolving on a curved
manifold embedded in \(\R^3\) remains relatively unexplored. Existing works
in this setting have primarily addressed motion planning and open-loop
control on general or specific surfaces~\cite{MccReyReh17}.
The problem arises, for example, in planetary exploration and autonomous
navigation over mountainous or otherwise non-flat terrain. A key difficulty
is that both the vehicle kinematics and the control objective must respect
the geometry of the underlying manifold.

This paper develops path-following controllers for a kinematic unicycle
evolving on a smooth manifold \(\mathsf{M}\) embedded in \(\R^3\). First,
we derive the kinematic model directly in a geometric framework on the unit
tangent bundle of the surface. Second, we construct two feedback laws for path
following of a geometric curve \(\mathsf{C}\subset\mathsf{M}\): a static
controller for prescribed nonzero translational speed and a dynamic
controller that additionally permits shaping the longitudinal evolution
along the path.

\begin{figure}
    \centering
    \includegraphics[width=0.9\linewidth]{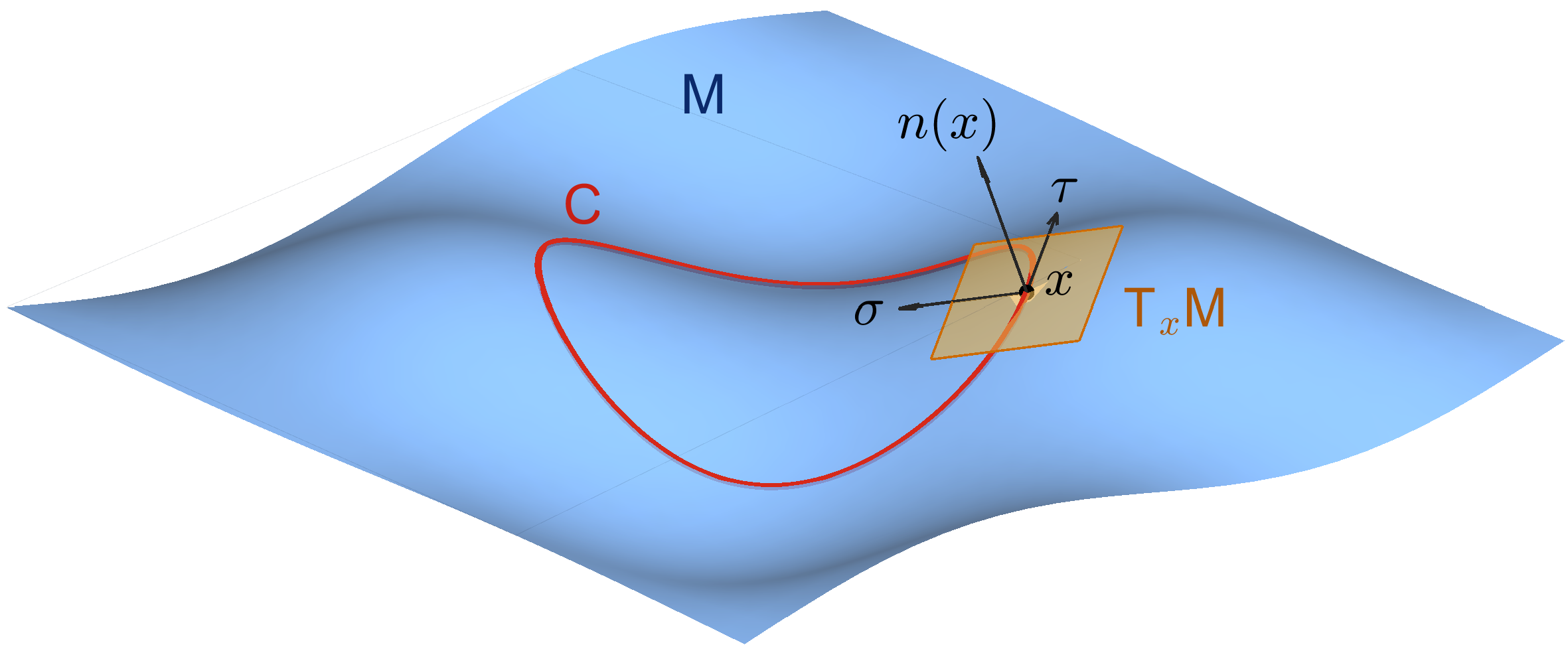}
    \caption{A target path \(\mathsf{C}\) on the manifold \(\mathsf{M}\).
    At each \(x\in\mathsf{M}\), the vectors \(\{\tau,\sigma\}\) form an
    orthonormal basis of \(\mathsf{T}_x\mathsf{M}\), and \(n(x)\) is the
    unit normal vector.}
    \label{fig:M}
\end{figure}

Our formulation is related to virtual holonomic constraint (VHC) and
virtual nonholonomic constraint (VNC) approaches, but addresses a
structurally different plant~\cite{ConCosMag2017,SimStrBloCol2023}. Existing geometric VHC/VNC theories are formulated for second-order
mechanical control systems, whereas the system considered here is a
first-order kinematic unicycle evolving on the unit tangent bundle, with
velocity-level inputs. Our setting therefore requires a geometric
construction of the lifted path-following manifold tailored to this
surface-constrained kinematic system, together with a dynamic extension for
shaping the longitudinal evolution along the path.

The main contributions are as follows:
\begin{enumerate}
    \item for the fixed-speed controller, we prove local exponential
    stability of the path-following manifold and path invariance; see
    Theorem~\ref{thm:fixed_v};

    \item for the variable-speed controller, we prove local exponential
    stability of the extended path-following manifold, path invariance, and
    exponential tracking of a prescribed longitudinal coordinate-rate law;
    see Theorem~\ref{thm:dyn_fdbk}.
\end{enumerate}

\noindent{\textbf{Notation:}}
We use the standard basis \((e_1,e_2,e_3)\) of \(\R^3\). All vectors are column vectors unless stated otherwise, and \(x^\top\) denotes the transpose of \(x\in\R^n\). For a \(C^2\) real-valued function \(f\), its Hessian is denoted by \(\Hess f\). For a smooth manifold \(\mathsf{M}\), \(\mathsf{T}_x\mathsf{M}\) and \(\mathsf{T}_x^*\mathsf{M}\) denote the tangent and cotangent spaces at \(x\in\mathsf{M}\), respectively, and \(\mathsf{T}\mathsf{M}\) denotes the tangent bundle, i.e., the disjoint union of the tangent spaces \(\mathsf{T}_x\mathsf{M}\), \(x\in\mathsf{M}\). Throughout the paper, \(\simeq\) denotes a canonical isomorphism; in particular, \(\mathsf{T}_y\R^n\simeq\R^n\) for every \(y\in\R^n\).


\section{Mathematical Preliminaries}\label{sec:preliminaries}
We first introduce the differential-geometric notation and identities used
throughout the paper; see~\cite{BulLew2004} for further background.
Let \(\mathsf{M}\) and \(\mathsf{N}\) be smooth manifolds, and let
\(F:\mathsf{M}\to\mathsf{N}\) be smooth. Its differential at
\(x\in\mathsf{M}\) is denoted by
\(
\dd F(x):\mathsf{T}_x\mathsf{M}\to \mathsf{T}_{F(x)}\mathsf{N}.
\)
Let \((\mathsf{M},g)\) be a Riemannian manifold. For a smooth function
\(f:\mathsf{M}\to\R\), the Riemannian gradient \(\grad f(x)\in\mathsf{T}_x\mathsf{M}\)
is defined by
\(
\dd f(x)[v]=g_x(\grad f(x),v),
\text{ for all } v\in\mathsf{T}_x\mathsf{M}.
\)
Let \(\nabla\) denote the Levi--Civita connection of \(g\). For
\(h\in C^2(\mathsf{M})\), its Hessian is the symmetric \((0,2)\)-tensor
defined by
\(
\Hess h(X,Y) := X\big(\dd h[Y]\big)-\dd h\big[\nabla_XY\big] = g\big(\nabla_X(\grad h),Y\big),
\)
for all smooth vector fields \(X,Y\) on \(\mathsf{M}\).
If \(x(\cdot)\) is a differentiable curve on \(\mathsf{M}\) and
\(v(\cdot)\in \mathsf{T}_{x(\cdot)}\mathsf{M}\) is a differentiable vector
field along \(x(\cdot)\), then
\begin{equation}\label{eq:hessian_curve_identity}
\begin{aligned}
\frac{d}{dt}\big(\dd h(x(t))[v(t)]\big)
&=
\Hess h(x(t))\big(\dot x(t),v(t)\big)\\
&\quad+
\dd h(x(t))\big[\nabla_{\dot x(t)}v(t)\big].
\end{aligned}
\end{equation}
When \(\mathsf{M}=\R^3\), we use the canonical identification
\(\mathsf{T}_x\R^3\simeq\R^3\) and the standard Euclidean metric. Hence, for smooth \(f:\R^3\to\R\),
\(
\dd f(x)[v]=\langle \grad f(x),v\rangle
\)
for all \(v\in\R^3\).
\section{Modeling}
\label{sec:modeling}
We develop the surface-constrained kinematic model for a unicycle robot,
building on the geometric modeling viewpoint of~\cite{MccReyReh17}. Let \(\phi:\R^3\to\R\) be \(C^r\), \(r\ge 2\), and define
\(
\mathsf{M}:=\{x\in\R^3:\phi(x)=0\}.
\)
Assume that \(0\) is a regular value of \(\phi\). Then \(\mathsf{M}\) is a two-dimensional embedded \(C^r\) submanifold of \(\R^3\), endowed with the metric induced by the Euclidean inner product. For each \(x\in\mathsf{M}\),
\(
\mathsf{T}_x\mathsf{M}
=
\{w\in\R^3:\dd\phi(x)[w]=0\}.
\)
The robot position is \(x\in\mathsf{M}\), and its heading is a unit tangent vector \(\tau\in\mathsf{T}_x\mathsf{M}\). Hence the configuration manifold is the unit tangent bundle
\begin{equation}
\label{eq:unit-tangent-bundle}
\mathsf{Q}:=U\mathsf{T}\mathsf{M}
=
\{(x,\tau):x\in\mathsf{M},\ \tau\in\mathsf{T}_x\mathsf{M},\ \|\tau\|=1\}.
\end{equation}
Since \(\mathsf{M}\) is a regular level set, a global unit normal field is given by
\[
n(x):=\frac{\grad\phi(x)}{\|\grad\phi(x)\|},
\qquad x\in\mathsf{M}.
\]
For \((x,\tau)\in\mathsf{Q}\), we  define
\(
\sigma:=n(x)\times \tau.
\)
Then \(\sigma\in\mathsf{T}_x\mathsf{M}\), and \(\{\tau,\sigma\}\) and \(\{\tau,\sigma,n(x)\}\) are orthonormal bases of \(\mathsf{T}_x\mathsf{M}\) and \(\R^3\), respectively; see Fig.~\ref{fig:M}.

Let \(v,\omega:\R_{\ge0}\to\R\) denote the translational and angular
velocity inputs. For an admissible motion
\(t\mapsto(x(t),\tau(t))\in\mathsf{Q}\), the no-side-slip kinematic
constraint requires \(\dot x(t)=v(t)\tau(t)\).
Moreover, \(\|\tau(t)\|=1\) and
\(\langle n(x(t)),\tau(t)\rangle=0\), so
%
%
\begin{align*}
\langle \dot\tau(t),\tau(t)\rangle &=0, \\
\langle \dot\tau(t),n(x(t))\rangle
&=
-v(t)\big\langle \tau(t),\dd n(x(t))[\tau(t)]\big\rangle.
\end{align*}
Thus, with \(\sigma(t):=n(x(t))\times\tau(t)\) and
\(
\langle \dot\tau(t),\sigma(t)\rangle=\omega(t),
\)
the decomposition of \(\dot\tau(t)\) in the basis
\(\{\tau(t),\sigma(t),n(x(t))\}\) gives
\begin{equation*}
\begin{aligned}
\dot\tau(t)
&=
\omega(t)\big(n(x(t))\times\tau(t)\big)\\
&\quad
-v(t)\big\langle \tau(t),\dd n(x(t))[\tau(t)]\big\rangle n(x(t)).
\end{aligned}
\end{equation*}
Suppressing the explicit dependence on \(t\), the unicycle kinematics on \(\mathsf{M}\), with state \((x,\tau)\in\mathsf{Q}\) and input \((v,\omega)\in\R^2\), are
\begin{subequations}\label{eq:geo_model}
\begin{align}
\dot x &= v\tau, \label{eq:geo_model_1}\\
\dot\tau
&=
\omega\big(n(x)\times\tau\big)
-
v\big\langle \tau,\dd n(x)[\tau]\big\rangle n(x).
\label{eq:geo_model_2}
\end{align}
\end{subequations}
If \((x(0),\tau(0))\in\mathsf{Q}\), then
\[
\frac{d}{dt}\phi(x(t))=0,
\;\;
\frac{d}{dt}\|\tau(t)\|^2=0,
\;\;
\frac{d}{dt}\langle n(x(t)),\tau(t)\rangle=0.
\]
Hence \(x(t)\in\mathsf{M}\), \(\tau(t)\in\mathsf{T}_{x(t)}\mathsf{M}\), and \(\|\tau(t)\|=1\) for all \(t\), so \eqref{eq:geo_model} defines a control system on \(\mathsf{Q}\).

The term
\(-v\langle\tau,\dd n(x)[\tau]\rangle n(x)\)
captures the variation of the tangent spaces along the motion, whereas \(\omega\) rotates the heading within \(\mathsf{T}_x\mathsf{M}\) along \(n(x)\times\tau\).
\begin{remark}
For \(\phi(x)=x_3\), the manifold is \(\mathsf{M}=\{x\in\R^3:x_3=0\}\), with \(n(x)=e_3\) and \(\dd n(x)=0\). Then \eqref{eq:geo_model} reduces to \(\dot x=v\tau\), \(\dot\tau=\omega(e_3\times\tau)\). Writing \(\tau=[\cos\theta,\sin\theta,0]^\top\) yields \(\dot x_1=v\cos\theta\), \(\dot x_2=v\sin\theta\), and \(\dot\theta=\omega\), which is the classical planar unicycle~\cite{AkhNie2011,NieMag04,ElhMag08}.
\end{remark}
\section{Problem Formulation}
\label{sec:problem}

Consider the unicycle model \eqref{eq:geo_model} evolving on the configuration manifold \(\mathsf{Q}=U\mathsf{T}\mathsf{M}\), i.e., the unit tangent bundle defined in~\eqref{eq:unit-tangent-bundle}.
Let \(h_1:\mathsf{M}\to\R\) be a \(C^r\) function, with \(r\ge 2\), and assume that \(0\) is a regular value of \(h_1\). Define the target path
\begin{equation}\label{eq:C_def_pf}
\mathsf{C}:=\{x\in\mathsf{M}:h_1(x)=0\}.
\end{equation}
Then \(\mathsf{C}\) is a one-dimensional embedded \(C^r\) submanifold of \(\mathsf{M}\), and
\begin{equation}\label{eq:TxC_kernel_pf}
\mathsf{T}_x\mathsf{C}=\ker \dd h_1(x),
\qquad \forall x\in \mathsf{C}.
\end{equation}
Since the state of the vehicle includes both position and heading, the relevant target set is not merely the curve \(\mathsf{C}\subset \mathsf{M}\), but its unit tangent lift to \(\mathsf{Q}\). Accordingly, define the path-following manifold
\begin{equation}\label{eq:Gamma_def_pf}
\begin{aligned}
\mathsf{\Gamma}
&:=
\{(x,\tau)\in\mathsf{Q}:x\in\mathsf{C},\ \tau\in\mathsf{T}_x\mathsf{C}\}\\
&=
\{(x,\tau)\in\mathsf{Q}:h_1(x)=0,\ \dd h_1(x)[\tau]=0\}.
\end{aligned}
\end{equation}
Thus, \(\mathsf{\Gamma}\) consists of all configurations for which the vehicle lies on the target path and its heading is tangent to that path. Since no temporal parameterization is prescribed for \(\mathsf{C}\), the objective is path following rather than trajectory tracking.

The first problem considered in this paper is the following.

\begin{problem}[Fixed-speed geometric path following]\label{prob:fixed_speed}
Given a prescribed constant translational speed
\(
v(t)\equiv \bar v,
\) for
\(
\bar v\in\R\setminus\{0\},
\)
design a feedback law \(\omega=\kappa(x,\tau)\), defined on a neighborhood
of \(\mathsf{\Gamma}\), such that, for the resulting closed-loop system,
\begin{enumerate}[label=\textbf{P1.\arabic*}]
\item \label{itm:P11} the manifold \(\mathsf{\Gamma}\) is forward invariant;
\item \label{itm:P12} the manifold \(\mathsf{\Gamma}\) is locally exponentially stable.
\end{enumerate}
\end{problem}

For the variable-speed design, let \(h_2:\mathsf{M}\to\R\) be another \(C^r\) function, and define
\begin{equation}\label{eq:h_pair_pf}
h:=(h_1,h_2):\mathsf{M}\to\R^2.
\end{equation}
Introduce the singular set
\begin{equation}\label{eq:Sigma_h_pf}
\Sigma_h:=\{x\in\mathsf{M}:\rank \dd h(x)<2\},
\end{equation}
and the corresponding regularity domain
\(
\mathcal{O}_h:=\mathsf{M}\setminus \Sigma_h.
\)
For each \(x\in\mathcal{O}_h\), the differential
\(
\dd h(x):\mathsf{T}_x\mathsf{M}\to\R^2
\)
is a linear isomorphism. In particular, for \(x\in \mathsf{C}\cap\mathcal{O}_h\), the restriction of \(\dd h_2(x)\) to \(\mathsf{T}_x\mathsf{C}=\ker \dd h_1(x)\) is nonzero; hence \(h_2\) defines a local longitudinal coordinate along \(\mathsf{C}\cap\mathcal{O}_h\).

To obtain a square input-output structure, we introduce the dynamic extension
\(
z:=v,
\;
\dot z=\bar u,
\)
where \(\bar u\in\R\) is a new input. The corresponding extended system on \(\mathsf{Q}\times\R\) is
\begin{subequations}\label{eq:ext_dyn_pf}
\begin{align}
\dot x &= z\tau, \\
\dot \tau &= \omega\big(n(x)\times \tau\big)-z\big\langle \tau,\dd n(x)[\tau]\big\rangle n(x), \\
\dot z &= \bar u.
\end{align}
\end{subequations}
Next, we define
\begin{equation}\label{eq:Xext_pf}
\mathcal{X}:=
\{(x,\tau,z)\in\mathsf{Q}\times\R:x\in\mathcal{O}_h,\ z\neq 0\},
\end{equation}
and the corresponding extended path-following manifold
\begin{equation}\label{eq:Gamma_ext_def_pf}
\mathsf{\Gamma}_{\mathrm{ext}}
:=
\{(x,\tau,z)\in\mathcal{X}:(x,\tau)\in\mathsf{\Gamma}\}.
\end{equation}
Let \(\upsilon_d:\R\to\R\) be a \(C^1\) function satisfying
\begin{equation}\label{eq:upsilon_d_assump_pf}
|\upsilon_d(s)|\ge v_{\min}>0,
\qquad \forall s\in\R,
\end{equation}
for some constant \(v_{\min}>0\). 
The second control objective is to enforce the longitudinal evolution law
\(
\dot h_2=\upsilon_d(h_2)
\)
along the path. Accordingly, define the longitudinal error
\begin{equation}\label{eq:e2_pf}
e_2:=\dot h_2-\upsilon_d(h_2).
\end{equation}
This leads to the following problem.

\begin{problem}[Dynamic path following with longitudinal coordinate-rate shaping]\label{prob:dynamic_speed}
Design a feedback law \((\bar u,\omega)=\kappa_{\mathrm{ext}}(x,\tau,z)\) for the dynamically extended system~\eqref{eq:ext_dyn_pf} such that, for the resulting closed-loop system on \(\mathcal{X}\),
\begin{enumerate}[label=\textbf{P2.\arabic*}]
\item \label{itm:P21} the manifold \(\mathsf{\Gamma}_{\mathrm{ext}}\) is forward invariant;
\item \label{itm:P22} the manifold \(\mathsf{\Gamma}_{\mathrm{ext}}\) is locally exponentially stable;
\item \label{itm:P23} the longitudinal error \(e_2\) converges to zero exponentially.
\end{enumerate}
\end{problem}

Problem~\ref{prob:dynamic_speed} prescribes the evolution of the longitudinal coordinate \(h_2\), not necessarily the translational speed \(v\) itself. In the special case where \(h_2\) is chosen as a local arc-length coordinate along \(\mathsf{C}\), the law \(\dot h_2=\upsilon_d(h_2)\) coincides with a prescribed signed speed profile along the path.

\section{Feedback Control Scheme 1}\label{sec:fixed_v}

In this section, the translational speed is fixed at a prescribed nonzero constant value
\(
v(t)\equiv \bar v.
\)
Accordingly, the unicycle model \eqref{eq:geo_model} reduces to the single-input control system
\begin{subequations}\label{eq:geo_model_fixed_v}
\begin{align}
\dot{x} &= \bar v\,\tau, \label{eq:geo_model_fixed_v_1}\\
\dot{\tau} &= \omega\big(n(x)\times \tau\big)
-
\bar v\,\big\langle \tau,\dd n(x)[\tau]\big\rangle n(x). \label{eq:geo_model_fixed_v_2}
\end{align}
\end{subequations}
The control input is the angular velocity \(\omega\in\R\).

Let \(y:\mathsf{Q}\to\R\) denote the scalar output
\(y(x,\tau):=h_1(x)\).
Along a trajectory, let \(\sigma(t):=n(x(t))\times\tau(t)\).
The next proposition gives the input-output relations needed for the feedback design.

\begin{proposition}\label{prop:rel_deg_two}
Consider the system \eqref{eq:geo_model_fixed_v} and the output $y = h_1(x)$. Then, along every differentiable trajectory of \eqref{eq:geo_model_fixed_v},
\begin{subequations}\label{eq:y_derivatives}
\begin{align}
\dot y(t) &= \bar v\,\dd h_1(x(t))[\tau(t)], \label{eq:y_dot}\\
\ddot y(t) &= \bar v^{\,2}\,\Hess h_1(x(t))\big(\tau(t),\tau(t)\big)
+
\bar v\,\omega(t)\,\dd h_1(x(t))[\sigma(t)].
\label{eq:y_ddot}
\end{align}
\end{subequations}
\end{proposition}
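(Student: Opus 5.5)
The plan is a direct computation that uses only the chain rule and the curve identity~\eqref{eq:hessian_curve_identity} on \((\mathsf{M},g)\) with the induced metric. For the first derivative, \(y(t)=h_1(x(t))\), so \(\dot y=\dd h_1(x)[\dot x]\). Substituting \(\dot x=\bar v\tau\) from~\eqref{eq:geo_model_fixed_v_1} and using linearity of \(\dd h_1(x)\) gives~\eqref{eq:y_dot} at once. Since \(x(t)\in\mathsf{M}\) and \(\tau(t)\in\mathsf{T}_{x(t)}\mathsf{M}\) for all \(t\) (shown in Section~\ref{sec:modeling}), \(\tau(\cdot)\) is a differentiable vector field along the curve \(x(\cdot)\) in \(\mathsf{M}\). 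This is the setting in which~\eqref{eq:hessian_curve_identity} applies.

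For the second derivative, I apply~\eqref{eq:hessian_curve_identity} with \(h=h_1\) and \(v(t)=\tau(t)\):
\[
\ddot y=\bar v\big(\Hess h_1(x)(\dot x,\tau)+\dd h_1(x)[\nabla_{\dot x}\tau]\big)
=\bar v^{2}\Hess h_1(x)(\tau,\tau)+\bar v\,\dd h_1(x)[\nabla_{\dot x}\tau].
\]
It then remains to identify the covariant derivative \(\nabla_{\dot x}\tau\). For a submanifold of \(\R^3\) with the induced metric, the Levi--Civita connection along a curve is the tangential projection of the ordinary derivative: \(\nabla_{\dot x}\tau=\dot\tau-\langle\dot\tau,n(x)\rangle n(x)\). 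I take this as the standard Gauss formula, see~\cite{BulLew2004}.

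Substituting~\eqref{eq:geo_model_fixed_v_2}, the normal term \(-\bar v\langle\tau,\dd n(x)[\tau]\rangle n(x)\) is removed by the projection. The remaining term \(\omega\,(n(x)\times\tau)=\omega\sigma\) is already tangent, since \(\sigma\perp n\). Hence \(\nabla_{\dot x}\tau=\omega\sigma\), and linearity gives \(\dd h_1(x)[\omega\sigma]=\omega\,\dd h_1(x)[\sigma]\), which is~\eqref{eq:y_ddot}.

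The only step needing care is this identification of \(\nabla_{\dot x}\tau\) with the tangential part of \(\dot\tau\). To make it self-contained, I would note an alternative: extend \(h_1\) locally to a \(C^2\) function \(\tilde h\) on \(\R^3\) and differentiate \(\langle\grad\tilde h(x),\tau\rangle\) directly. The normal component of \(\dot\tau\) then contributes \(\langle\grad\tilde h,n\rangle\langle\dot\tau,n\rangle\). Together with \(\langle\nabla^2\tilde h\,\tau,\tau\rangle\), this recombines into the Riemannian Hessian via the Gauss formula \(\Hess h_1(\tau,\tau)=\langle\nabla^2\tilde h\,\tau,\tau\rangle-\langle\grad\tilde h,n\rangle\langle\tau,\dd n[\tau]\rangle\). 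The two routes are equivalent, so I would take the first, invoking~\eqref{eq:hessian_curve_identity} as stated.
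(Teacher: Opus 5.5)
Your proposal is correct and follows the paper's own proof essentially step for step. You get $\dot y$ from the chain rule, apply the curve identity \eqref{eq:hessian_curve_identity} to obtain $\ddot y$, and identify $\nabla_{\dot x}\tau$ as the tangential projection of $\dot\tau$, which is $\omega\sigma$. Your extension-based cross-check, including the correction term $-\langle\grad\tilde h,n\rangle\langle\tau,\dd n[\tau]\rangle$ in the Riemannian Hessian, is also correct but not needed.
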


\begin{proof}
The identity \eqref{eq:y_dot} follows from the chain rule and \eqref{eq:geo_model_fixed_v_1}. Differentiating \eqref{eq:y_dot} and using \eqref{eq:hessian_curve_identity}, we obtain
\[
\ddot y(t)
=
\bar v\,\Hess h_1(x(t))\big(\dot x(t),\tau(t)\big)
+
\bar v\,\dd h_1(x(t))\big[\nabla_{\dot x(t)}\tau(t)\big].
\]
Since the Levi--Civita connection is the tangential projection of the ambient derivative, \eqref{eq:geo_model_fixed_v_2} implies
\(
\nabla_{\dot x(t)}\tau(t)=\omega(t)\sigma(t).
\)
Substituting this identity together with \(\dot x(t)=\bar v\,\tau(t)\) gives \eqref{eq:y_ddot}.
\end{proof}

Next, we define scalar functions \(a,b:\mathsf{Q}\to\R\) by
\begin{equation}\label{eq:a_b_defs}
\begin{aligned}
a(x,\tau)&:=\bar v\,\dd h_1(x)\big[n(x)\times \tau\big],\\
b(x,\tau)&:=\bar v^{\,2}\,\Hess h_1(x)(\tau,\tau).
\end{aligned}
\end{equation}
Then \eqref{eq:y_ddot} can be written as
\begin{equation}\label{eq:yddot_ab}
\ddot y = a(x,\tau)\,\omega + b(x,\tau).
\end{equation}
Since \(\bar v\neq 0\), \eqref{eq:Gamma_def_pf} and
Proposition~\ref{prop:rel_deg_two} imply that
\[
\mathsf{\Gamma}
=
\{(x,\tau)\in\mathsf{Q}: y(x,\tau)=0,\ \dot y(x,\tau)=0\}.
\]
%
The next lemma shows that the decoupling coefficient does not vanish on \(\mathsf{\Gamma}\).

\begin{lemma}\label{lem:a_nonzero}
For every \((x,\tau)\in\mathsf{\Gamma}\), one has
\(
a(x,\tau)\neq 0.
\)
Consequently, there exists an open neighborhood \(\mathcal U\subset\mathsf{Q}\) of \(\mathsf{\Gamma}\) such that
\(
a(x,\tau)\neq 0,
\text{ for all } (x,\tau)\in\mathcal U.
\)
\end{lemma}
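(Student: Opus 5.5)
The plan is to work pointwise on \(\mathsf{\Gamma}\) using the orthonormal basis \(\{\tau,\sigma\}\) of \(\mathsf{T}_x\mathsf{M}\), where \(\sigma=n(x)\times\tau\). Fix \((x,\tau)\in\mathsf{\Gamma}\). By \eqref{eq:Gamma_def_pf}, \(h_1(x)=0\) and \(\dd h_1(x)[\tau]=0\). Suppose, for contradiction, that \(a(x,\tau)=0\). Since \(\bar v\neq0\), this gives \(\dd h_1(x)[\sigma]=0\). The covector \(\dd h_1(x)\in\mathsf{T}^*_x\mathsf{M}\) would then vanish on both \(\tau\) and \(\sigma\). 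These span \(\mathsf{T}_x\mathsf{M}\), as recorded in Section~\ref{sec:modeling}, so \(\dd h_1(x)=0\). This contradicts the standing assumption that \(0\) is a regular value of \(h_1\), because \(x\in\mathsf{C}=h_1^{-1}(0)\) requires \(\dd h_1(x)\neq 0\). Hence \(a\neq0\) on \(\mathsf{\Gamma}\).

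An equivalent and more quantitative way to phrase this step is through the Riemannian gradient. Write \(\grad h_1(x)=\alpha\tau+\beta\sigma\), so that \(\alpha=\dd h_1(x)[\tau]\) and \(\beta=\dd h_1(x)[\sigma]\). On \(\mathsf{\Gamma}\) we have \(\alpha=0\), and therefore \(|a(x,\tau)|=|\bar v|\,\|\grad h_1(x)\|>0\). I would record this identity since it makes the nonvanishing transparent.

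For the neighborhood, I would argue by continuity. Since \(r\ge2\), the maps \(n\) and \(\dd h_1\) are at least \(C^1\), and the cross product is smooth. Hence \(a:\mathsf{Q}\to\R\) is continuous, and \(\mathcal U:=a^{-1}(\R\setminus\{0\})\) is open in \(\mathsf{Q}\). By the first part, \(\mathsf{\Gamma}\subset\mathcal U\), so \(\mathcal U\) is the required open neighborhood. No compactness of \(\mathsf{\Gamma}\) is needed, because we take the full preimage rather than a uniform tube.

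The only step needing care is the claim that \(\{\tau,\sigma\}\) spans \(\mathsf{T}_x\mathsf{M}\), which the paper has already stated. It follows from \(\tau\perp n\), \(\|\tau\|=\|n\|=1\), and the fact that \(\sigma=n\times\tau\) is a unit vector orthogonal to both \(n\) and \(\tau\). I do not expect a genuine obstacle beyond citing the regular-value assumption correctly.
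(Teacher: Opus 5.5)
Your proposal is correct and follows essentially the same route as the paper. The paper shows directly that \(\grad h_1(x)\) is collinear with \(\sigma\) on \(\mathsf{\Gamma}\), so \(\dd h_1(x)[\sigma]=\pm\|\grad h_1(x)\|\neq 0\); this is exactly your identity \(|a(x,\tau)|=|\bar v|\,\|\grad h_1(x)\|\), and your contradiction argument is the same fact phrased in terms of the covector. The neighborhood is likewise obtained from continuity of \(a\), and your explicit choice \(\mathcal U:=a^{-1}(\R\setminus\{0\})\) simply makes that step concrete.
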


\begin{proof}
Let \((x,\tau)\in\mathsf{\Gamma}\). Then \(x\in\mathsf{C}\) and \(\tau\in\mathsf{T}_x\mathsf{C}\). Since \(0\) is a regular value of \(h_1\), one has
\(
\mathsf{T}_x\mathsf{C}=\ker \dd h_1(x).
\)
Hence \(\dd h_1(x)[\tau]=0\).
Let \(\sigma:=n(x)\times \tau\). Since \(\{\tau,\sigma\}\) is an orthonormal basis of \(\mathsf{T}_x\mathsf{M}\), the vector \(\sigma\) spans the orthogonal complement of \(\mathsf{T}_x\mathsf{C}\) in \(\mathsf{T}_x\mathsf{M}\). On the other hand, \(\grad h_1(x)\in \mathsf{T}_x\mathsf{M}\) is orthogonal to \(\ker \dd h_1(x)=\mathsf{T}_x\mathsf{C}\), and therefore \(\grad h_1(x)\) is collinear with \(\sigma\). Hence
\[
\dd h_1(x)[\sigma]
=
\langle \grad h_1(x),\sigma\rangle
=
\pm \|\grad h_1(x)\|.
\]
Since \(0\) is a regular value of \(h_1\), one has \(\grad h_1(x)\neq 0\). Thus \(\dd h_1(x)[\sigma]\neq 0\), and, because \(\bar v\neq 0\),
\[
a(x,\tau)=\bar v\,\dd h_1(x)[\sigma]\neq 0.
\]
The second claim follows from continuity of \(a\).
\end{proof}

On the neighborhood \(\mathcal U\), Lemma~\ref{lem:a_nonzero} allows us to define the feedback law
\begin{equation}\label{eq:omega_fixed_v}
\omega
=
\frac{-\,b(x,\tau)+\nu}{a(x,\tau)},
\end{equation}
where \(\nu\in\R\) is an auxiliary input. Substituting \eqref{eq:omega_fixed_v} into \eqref{eq:yddot_ab} yields
\(\ddot y=\nu\).

Hence, with
\(
\xi:=
(\xi_1,\xi_2)^\top
:=
(y,\dot y)^\top,
\)
the transverse dynamics take the form
\begin{equation}\label{eq:xi_dynamics}
\dot\xi
=
\begin{bmatrix}
0 & 1\\
0 & 0
\end{bmatrix}\xi
+
\begin{bmatrix}
0\\
1
\end{bmatrix}\nu.
\end{equation}
Choose
\begin{equation}\label{eq:nu_fixed_v}
\nu=-k_1\xi_1-k_2\xi_2,
\qquad
k_1,k_2>0,
\end{equation}
which yields
\begin{equation}\label{eq:xi_closed_loop}
\dot\xi
=
\begin{bmatrix}
0 & 1\\
-k_1 & -k_2
\end{bmatrix}\xi,
\end{equation}
whose origin is exponentially stable.

The next lemma shows that \(\xi=(y,\dot y)\) defines local transverse coordinates to \(\mathsf{\Gamma}\).

\begin{lemma}\label{lem:Gamma_local_coords}
Define \(\Psi:\mathsf{Q}\to\R^2\) by
\[
\Psi(x,\tau)
:=
\begin{bmatrix}
h_1(x)\\[1mm]
\bar v\,\dd h_1(x)[\tau]
\end{bmatrix}.
\]
Then \(\Psi^{-1}(0)=\mathsf{\Gamma}\), and \(\dd\Psi(x,\tau)\) has rank \(2\) for every \((x,\tau)\in\mathsf{\Gamma}\). Consequently, for every \(\bar q\in\mathsf{\Gamma}\), there exist an open neighborhood \(\mathcal W\subset\mathcal U\) of \(\bar q\), an open set \(\mathcal V\subset\R\), and a smooth coordinate chart
\(
\Phi=(\Psi,\eta):\mathcal W\to \R^2\times \mathcal V
\)
such that
\(
\mathsf{\Gamma}\cap\mathcal W
=
\{q\in\mathcal W:\Psi(q)=0\}.
\)
\end{lemma}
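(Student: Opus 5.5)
The argument has three parts: the set identity \(\Psi^{-1}(0)=\mathsf{\Gamma}\), the rank computation on \(\mathsf{\Gamma}\), and the chart from the constant-rank (submersion) theorem. For the set identity, since \(\bar v\neq 0\), the conditions \(h_1(x)=0\) and \(\bar v\,\dd h_1(x)[\tau]=0\) coincide with the defining conditions of \(\mathsf{\Gamma}\) in \eqref{eq:Gamma_def_pf}. Hence \(\Psi^{-1}(0)=\mathsf{\Gamma}\) is immediate.

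For the rank claim, I will produce two tangent vectors to \(\mathsf{Q}\) at \(\bar q=(x,\tau)\in\mathsf{\Gamma}\) whose images under \(\dd\Psi(\bar q)\) are linearly independent in \(\R^2\). For these I will use tangent vectors of admissible curves of the unicycle model \eqref{eq:geo_model}, since these are automatically tangent to \(\mathsf{Q}\).

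\textbf{Vertical direction.} Take the curve generated by \(v=0\), \(\omega=1\), i.e.\ \(s\mapsto(x,\cos s\,\tau+\sin s\,\sigma)\) with \(\sigma=n(x)\times\tau\). Along it \(h_1\) is constant. The second component has derivative \(\bar v\,\dd h_1(x)[\sigma]\), which equals \(a(x,\tau)\). By Lemma~\ref{lem:a_nonzero} this is nonzero. So the image is \((0,a)^\top\) with \(a\neq0\).

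\textbf{Horizontal direction.} Take the curve generated by \(v=1\), \(\omega=0\) through \(\bar q\) with heading \(\sigma\). This is justified because \((x,\sigma)\in\mathsf{Q}\) and \(\mathsf{Q}\) is a manifold, so one may equally take any curve \(c(s)\) in \(\mathsf{Q}\) whose base point moves with velocity \(\sigma\). Concretely, let \(x(s)\subset\mathsf{M}\) have \(x'(0)=\sigma\), and let \(\tau(s)\) be any smooth unit tangent field along it with \(\tau(0)=\tau\), e.g.\ the normalized tangential projection of \(\tau\). Then the first component has derivative \(\dd h_1(x)[\sigma]\neq 0\), and the second component is some finite number \(c\). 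The image is \((\dd h_1(x)[\sigma],c)^\top\).

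The \(2\times2\) matrix formed by these two image vectors is triangular with nonzero diagonal entries \(\dd h_1(x)[\sigma]\) and \(a\). Hence \(\dd\Psi(\bar q)\) is surjective, so it has rank 2.

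For the chart, \(\Psi\) is \(C^{r-1}\) on the 3-dimensional manifold \(\mathsf{Q}\), and it is a submersion at every point of \(\mathsf{\Gamma}\). Surjectivity of the differential is an open condition, so \(\Psi\) is a submersion on a neighborhood of \(\bar q\), which I intersect with \(\mathcal U\). The local submersion (implicit function) theorem then gives a function \(\eta\), taking values in an open \(\mathcal V\subset\R\), on a neighborhood \(\mathcal W\) of \(\bar q\), such that \(\Phi=(\Psi,\eta)\) is a diffeomorphism onto an open subset of \(\R^2\times\R\). Shrinking \(\mathcal W\), I can take the image to be a product \(\R^2\)-ball \(\times\,\mathcal V\). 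The identity \(\mathsf{\Gamma}\cap\mathcal W=\{\Psi=0\}\cap\mathcal W\) follows from the set identity above.

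The main subtlety is regularity. Since \(h_1\) is only \(C^r\), \(\Psi\) is \(C^{r-1}\), and the resulting chart is at least \(C^1\) because \(r\ge2\). The word ``smooth'' in the statement should be read in this sense.
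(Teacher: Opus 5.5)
Your proof is correct and follows essentially the paper's route: the set identity from \(\bar v\neq0\), the vertical vector \((0,\sigma)\) with image \((0,a)^\top\) and \(a\neq0\) by Lemma~\ref{lem:a_nonzero}, a second direction on which \(\dd\psi_1\neq0\), and then the submersion theorem. The paper gets \(\dd\psi_1\neq0\) abstractly from \(\psi_1=h_1\circ\pi\) with \(\pi\) a submersion, whereas you build the horizontal lift of \(\sigma\) explicitly, and your triangular-matrix check is the dual of the paper's ``\(\dd\psi_1[G]=0\), \(\dd\psi_2[G]\neq0\)'' independence argument. One small fix: that horizontal curve is not an admissible unicycle motion, since base velocity \(\sigma\) violates no-side-slip, and in fact every admissible direction at a point of \(\mathsf{\Gamma}\) lies in \(\ker\dd\psi_1\). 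So drop the framing ``generated by \(v=1\), \(\omega=0\) through \(\bar q\) with heading \(\sigma\)''; your concrete construction via the normalized tangential projection of \(\tau\) is what actually does the work, and your \(C^{r-1}\) caveat on the chart is a fair sharpening of the paper's ``smooth.''
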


\begin{proof}
The identity \(\Psi^{-1}(0)=\mathsf{\Gamma}\) follows directly from \eqref{eq:Gamma_def_pf} and the fact that \(\bar v\neq 0\).
We can write \(\Psi=(\psi_1,\psi_2)^\top\), where
\(\psi_1(x,\tau):=h_1(x)\) and
\(\psi_2(x,\tau):=\bar v\,\dd h_1(x)[\tau]\).
Let \(\pi:\mathsf{Q}\to\mathsf{M}\), \(\pi(x,\tau)=x\), denote the canonical projection. Then \(\psi_1=h_1\circ \pi\). It follows from~\cite[Definition 3.38]{BulLew2004} that since \(\pi\) is a submersion and \(0\) is a regular value of \(h_1\), one has
\(
\dd\psi_1(x,\tau)\neq 0,
\text{for all } (x,\tau)\in\mathsf{\Gamma}.
\)
%
%
%
For \((x,\tau)\in\mathsf{\Gamma}\), let
\(\sigma:=n(x)\times\tau\) and \(G:=(0,\sigma)\).
The curve
\(c(s):=(x,\cos(s)\tau+\sin(s)\sigma)\) lies in
\(\mathsf{Q}\) and satisfies \(\dot c(0)=G\); hence
\(G\in\mathsf{T}_{(x,\tau)}\mathsf{Q}\).
Since \(\psi_1\) depends only on \(x\),
\(\dd\psi_1(x,\tau)[G]=0\), whereas
\[
\dd\psi_2(x,\tau)[G]
=
\bar v\,\dd h_1(x)[\sigma]
=
a(x,\tau).
\]

%
By Lemma~\ref{lem:a_nonzero},
\(\dd\psi_2[G]=a(x,\tau)\neq0\) on \(\mathsf{\Gamma}\).
Together with \(\dd\psi_1\neq0\) and \(\dd\psi_1[G]=0\), this implies
that \(\dd\psi_1\) and \(\dd\psi_2\) are linearly independent.
Hence \(\rank\dd\Psi(x,\tau)=2\) on \(\mathsf{\Gamma}\).
Now fix \(\bar q\in\mathsf{\Gamma}\). Since \(\rank \dd\Psi(\bar q)=2=\dim\R^2\), the map \(\Psi\) is a submersion at \(\bar q\). Hence, by the submersion theorem~\cite[pg. 76]{BulLew2004}, there exist an open neighborhood \(\mathcal W\subset\mathcal U\) of \(\bar q\), an open set \(\mathcal V\subset\R\), and a smooth coordinate chart
\(
\Phi=(\Psi,\eta):\mathcal W\to \R^2\times \mathcal V.
\)
Since \(\Psi^{-1}(0)=\mathsf{\Gamma}\), it follows that
\(
\mathsf{\Gamma}\cap\mathcal W
=
\{q\in\mathcal W:\Psi(q)=0\}.
\)
This proves the final statement of the lemma.
\end{proof}

It remains to characterize the reduced dynamics on \(\mathsf{\Gamma}\).
Let \(\mathsf{C}_0\) be a connected component of \(\mathsf{C}\), and
let \(t_{\mathsf{C}}\) be a local smooth unit tangent field on
\(\mathsf{C}_0\). Then \(\mathsf{\Gamma}\) locally decomposes into the
two branches
%
\begin{equation}\label{eq:Gamma_pm}
\mathsf{\Gamma}_\pm
:=
\{(x,\tau)\in\mathsf{\Gamma}:\tau=\pm t_{\mathsf{C}}(x)\}.
\end{equation}
Along either branch, the position dynamics satisfy
\begin{equation}\label{eq:zero_dyn_x}
\dot x
=
\bar v\,\tau
=
\pm \bar v\,t_{\mathsf{C}}(x).
\end{equation}
%
If \(s\) denotes a local arc-length coordinate on \(\mathsf{C}_0\) compatible with the orientation defined by \(t_{\mathsf{C}}\), then \eqref{eq:zero_dyn_x} reduces to
\begin{equation}\label{eq:zero_dyn_s}
\dot s=\pm \bar v.
\end{equation}
Thus, on \(\mathsf{\Gamma}\), the vehicle moves along the target path with constant signed speed \(\pm \bar v\), equivalently with constant speed \(|\bar v|\).

The preceding discussion is summarized in the following theorem.

\begin{theorem}\label{thm:fixed_v}
Assume that \(\bar v\neq 0\), that \(0\) is a regular value of \(h_1\), and that \(\mathsf{C}=h_1^{-1}(0)\) is the target path. Let \(\mathcal U\subset\mathsf{Q}\) be the neighborhood of \(\mathsf{\Gamma}\) provided by Lemma~\ref{lem:a_nonzero}. Then the feedback law \eqref{eq:omega_fixed_v}, \eqref{eq:nu_fixed_v} is well defined on \(\mathcal U\). Moreover, the following statements hold:
\begin{enumerate}[label=(\roman*)]
\item for every closed-loop trajectory taking values in \(\mathcal U\), the transverse variable \(\xi=(y,\dot y)^\top\)
evolves according to the linear system \eqref{eq:xi_closed_loop},

\item the manifold \(\mathsf{\Gamma}\) is forward invariant and locally exponentially stable in \(\mathcal U\), and

\item on each local branch \(\mathsf{\Gamma}_\pm\), the reduced dynamics are given by \eqref{eq:zero_dyn_x}, or equivalently by \eqref{eq:zero_dyn_s}.
\end{enumerate}
\end{theorem}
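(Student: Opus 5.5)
Well-definedness follows at once from Lemma~\ref{lem:a_nonzero}. Since \(a\) does not vanish on \(\mathcal U\), and \(a,b\) are continuous (indeed \(C^{r-2}\)) on \(\mathsf{Q}\), the quotient in \eqref{eq:omega_fixed_v} with \(\nu\) from \eqref{eq:nu_fixed_v} is a well-defined feedback \(\kappa(x,\tau)\) on \(\mathcal U\). For existence and uniqueness of solutions I would note that \(\nu=-k_1h_1(x)-k_2\bar v\,\dd h_1(x)[\tau]\) is \(C^{r-1}\). The closed-loop vector field is therefore \(C^{r-2}\), which is locally Lipschitz when \(r\ge 3\). For \(r=2\) I would either assume the needed extra smoothness or argue only along solutions, which is all that (i) requires.

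For (i), take any closed-loop solution staying in \(\mathcal U\). Proposition~\ref{prop:rel_deg_two} gives \(\dot\xi_1=\dot y=\xi_2\). By \eqref{eq:yddot_ab}, substituting \eqref{eq:omega_fixed_v} gives \(\dot\xi_2=\ddot y=a\omega+b=\nu\). Inserting \eqref{eq:nu_fixed_v} then yields exactly \eqref{eq:xi_closed_loop}.

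For (ii), forward invariance comes from uniqueness of solutions for the linear system \eqref{eq:xi_closed_loop}. If \(q(0)\in\mathsf{\Gamma}\), then \(\xi(0)=\Psi(q(0))=0\), so \(\xi(t)\equiv0\) as long as the trajectory is defined. Since \(\Psi^{-1}(0)=\mathsf{\Gamma}\) (Lemma~\ref{lem:Gamma_local_coords}), the trajectory stays in \(\mathsf{\Gamma}\subset\mathcal U\). For exponential stability, the Hurwitz matrix in \eqref{eq:xi_closed_loop} gives \(\|\xi(t)\|\le c\,e^{-\lambda t}\|\xi(0)\|\) for some \(c,\lambda>0\). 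This must be converted into a bound on \(\operatorname{dist}(q(t),\mathsf{\Gamma})\).

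That conversion is the main obstacle. The tool is Lemma~\ref{lem:Gamma_local_coords}: near each \(\bar q\in\mathsf{\Gamma}\), \(\Psi\) is part of a chart \((\Psi,\eta)\) and \(\mathsf{\Gamma}\cap\mathcal W=\{\Psi=0\}\). This yields local constants with \(c_1\operatorname{dist}(q,\mathsf{\Gamma})\le\|\Psi(q)\|\le c_2\operatorname{dist}(q,\mathsf{\Gamma})\) on a neighborhood of \(\bar q\). The difficulty is that \(\mathsf{\Gamma}\) need not be compact and the trajectory drifts along it (with speed \(|\bar v|\)). I would therefore assume, or restrict to, a compact portion of \(\mathsf{\Gamma}\), or a uniform neighborhood in the compact case, so that the constants can be taken uniform by a Lebesgue-number argument. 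I would also need to show that trajectories starting close enough remain in \(\mathcal U\). The bound \(\|\xi(t)\|\le c\|\xi(0)\|\) keeps \(\Psi(q(t))\) small, so \(q(t)\) stays in the sublevel set \(\{\|\Psi\|<\varepsilon\}\). I would argue that, for \(\varepsilon\) small, this set lies inside \(\mathcal U\) near \(\mathsf{\Gamma}\), by a compactness or uniformity argument on the fibers, since the fibers of \(\mathsf{Q}\) over \(\mathsf{M}\) are circles. Combining the two estimates gives \(\operatorname{dist}(q(t),\mathsf{\Gamma})\le (c\,c_2/c_1)e^{-\lambda t}\operatorname{dist}(q(0),\mathsf{\Gamma})\).

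For (iii), on \(\mathsf{\Gamma}\) we have \(\tau\in\mathsf{T}_x\mathsf{C}\) with \(\|\tau\|=1\), so \(\tau=\pm t_{\mathsf{C}}(x)\). By continuity the sign is constant along a trajectory on the connected branch, and \eqref{eq:geo_model_fixed_v_1} gives \eqref{eq:zero_dyn_x}. In arc-length coordinates, \(\dot x=\dot s\,t_{\mathsf{C}}(x)\), and comparing with \eqref{eq:zero_dyn_x} gives \eqref{eq:zero_dyn_s}.
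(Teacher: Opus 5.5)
Your proposal follows the same route as the paper. Part (i) comes from $\ddot y=a\omega+b=\nu$. Forward invariance comes from $\xi(0)=0\Rightarrow\xi\equiv0$ together with $\Psi^{-1}(0)=\mathsf{\Gamma}$. Exponential stability comes from the Hurwitz estimate plus the transverse coordinates of Lemma~\ref{lem:Gamma_local_coords}, and part (iii) from $\tau=\pm t_{\mathsf{C}}(x)$.

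The only difference is in (ii). The paper simply asserts that, because $\xi$ gives local transverse coordinates, exponential decay of $\xi$ yields local exponential stability of $\mathsf{\Gamma}$. You observe that converting $\|\xi(t)\|\le ce^{-\lambda t}\|\xi(0)\|$ into a bound on $\operatorname{dist}(q(t),\mathsf{\Gamma})$ needs a tube in $\mathcal U$ and constants $c_1,c_2$ that are uniform along all of $\mathsf{\Gamma}$. That concern is real, not pedantic. Consider the following example:
\begin{itemize}
\item In the plane ($\phi(x)=x_3$), take $h_1(x)=x_2e^{-x_1^2}$, for which $0$ is a regular value.
\item Start at $(x_1,x_2,\theta)=(0,\varepsilon,0)\in\mathcal U$. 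Then $\xi(0)=(\varepsilon,0)$.
\item While the solution stays in $\mathcal U$, $h_1=\xi_1$ is a nontrivial solution of the Hurwitz system. Hence $|h_1(t_k)|\ge c'e^{-\mu t_k}$ along some sequence $t_k\to\infty$.
\item If the solution converged to $\mathsf{\Gamma}$, then $\cos\theta\to\pm1$, so $|x_1|$ would grow linearly. Then $|x_2|=|h_1|e^{x_1^2}$ would be unbounded, a contradiction.
\end{itemize}
So for every sufficiently small $\varepsilon>0$, this solution either leaves $\mathcal U$ or fails to converge to $\mathsf{\Gamma}$. Statement (ii) therefore needs a hypothesis that neither the statement nor the paper's proof supplies. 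This is a gap in the paper's argument, not in yours.

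Of the fallbacks you list, only one works.
\begin{itemize}
\item \emph{Restricting to a compact portion of a noncompact $\mathsf{\Gamma}$ fails.} It cannot give an estimate valid for all $t\ge0$. Since $\|\dot x\|=|\bar v|$ and the heading stays nearly tangent to $\mathsf{C}$, trajectories near $\mathsf{\Gamma}$ travel along the path and leave any compact piece in finite time.
\item \emph{Assuming $\mathsf{C}$ compact works.} More generally, it suffices to have a tube of uniform width around $\mathsf{\Gamma}$ inside $\mathcal U$ on which $c_1\operatorname{dist}(\cdot,\mathsf{\Gamma})\le\|\Psi\|\le c_2\operatorname{dist}(\cdot,\mathsf{\Gamma})$ holds with uniform constants.
\end{itemize}
Under that assumption your argument is complete, provided the ``stays in $\mathcal U$'' step is phrased as a continuity argument inside that fixed tube. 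It should not be stated as a claim about the whole sublevel set $\{\|\Psi\|<\varepsilon\}$, which can have pieces far from $\mathsf{\Gamma}$.

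You also note that forward invariance uses only uniqueness for the linear $\xi$-system. It therefore survives the possible loss of a Lipschitz closed loop when $r=2$, a point the paper leaves implicit.
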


\begin{proof}
Statement~(i) follows from \eqref{eq:xi_dynamics} and the choice \eqref{eq:nu_fixed_v}, since the matrix in \eqref{eq:xi_closed_loop} is Hurwitz for \(k_1,k_2>0\).
For statement~(ii), first note that, because \(\bar v\neq 0\),
\[
\mathsf{\Gamma}
=
\{(x,\tau)\in\mathsf{Q}:h_1(x)=0,\ \dd h_1(x)[\tau]=0\}
=
\{\xi=0\}.
\]
Since \(\xi=0\) is an exponentially stable equilibrium of
\eqref{eq:xi_closed_loop}, \(\mathsf{\Gamma}\) is forward invariant.
Moreover, Lemma~\ref{lem:Gamma_local_coords} shows that \(\xi\) defines
local transverse coordinates to \(\mathsf{\Gamma}\); hence
\(\mathsf{\Gamma}\) is locally exponentially stable.
Finally, let a closed-loop trajectory evolve on \(\mathsf{\Gamma}\). Then \(x(t)\in\mathsf{C}\) and \(\tau(t)\in\mathsf{T}_{x(t)}\mathsf{C}\) for all \(t\). On any connected component \(\mathsf{C}_0\) and any local choice of smooth unit tangent field \(t_{\mathsf{C}}\), one has \(\tau(t)=\pm t_{\mathsf{C}}(x(t))\). Substituting this into \eqref{eq:geo_model_fixed_v_1} yields \eqref{eq:zero_dyn_x}, and \eqref{eq:zero_dyn_s} follows immediately. This proves statement~(iii).
\end{proof}

Theorem~\ref{thm:fixed_v} shows that the feedback law
\eqref{eq:omega_fixed_v}, \eqref{eq:nu_fixed_v} solves
Problem~\ref{prob:fixed_speed} locally. Statement~(ii) establishes
Objectives~\ref{itm:P11}--\ref{itm:P12}, while statement~(iii)
characterizes the induced motion on the path-following manifold.

\section{Feedback Control Scheme 2}\label{sec:dyn_fdbk}

%

We now consider the variable-speed design of
Problem~\ref{prob:dynamic_speed}, where the output
\(h=(h_1,h_2)\) consists of the transverse coordinate \(h_1\) and the
local longitudinal coordinate \(h_2\) on \(\mathcal O_h\).
Consider the dynamically extended system
\eqref{eq:ext_dyn_pf} on \(\mathcal X\), and define the corresponding
extended path-following manifold by
\begin{equation}\label{eq:Gammaext_dyn}
\mathsf{\Gamma}_{\mathrm{ext}}
:=
\{(x,\tau,z)\in\mathcal{X}:(x,\tau)\in\mathsf{\Gamma}\}.
\end{equation}
Since
\(\dot h_1=z\,\dd h_1(x)[\tau]\) and \(z\neq0\) on
\(\mathcal X\), \eqref{eq:Gamma_def_pf} gives
\begin{equation}\label{eq:Gammaext_h1}
\mathsf{\Gamma}_{\mathrm{ext}}
=
\{(x,\tau,z)\in\mathcal{X}:h_1(x)=0,\ \dot h_1(x,\tau,z)=0\}.
\end{equation}
Thus, \(\mathsf{\Gamma}_{\mathrm{ext}}\) is the zero-dynamics manifold
associated with the transverse output \(h_1\).

Define the vector output \(y_{\mathrm{ext}}:\mathsf{Q}\times\R\to\R^2\) by
\(y_{\mathrm{ext}}(x,\tau,z):=h(x)=(h_1(x),h_2(x))^\top\).
Along a trajectory \(t\mapsto (x(t),\tau(t),z(t))\), let
\(\sigma(t):=n(x(t))\times\tau(t)\).

The next proposition gives the input-output relations for the extended system.

\begin{proposition}\label{prop:dyn_ext_io}
Consider the extended system \eqref{eq:ext_dyn_pf} with output
\(y_{\mathrm{ext}}=h(x)\). Then, along every differentiable trajectory,

\begin{subequations}\label{eq:yext_derivatives}
\begin{align}
\dot h_i(t)
&=
z(t)\,\dd h_i(x(t))[\tau(t)],
\qquad i=1,2, \label{eq:yext_dot_i}\\
\ddot h_i(t)
&=
\dd h_i(x(t))[\tau(t)]\,\bar u(t) \nonumber\\
&\quad
+z(t)^2\,\Hess h_i(x(t))\big(\tau(t),\tau(t)\big) \nonumber\\
&\quad
+z(t)\omega(t)\,\dd h_i(x(t))[\sigma(t)],
\qquad i=1,2. \label{eq:yext_ddot_i}
\end{align}
\end{subequations}
Equivalently,
\begin{equation}\label{eq:yext_ddot_vec}
\ddot y_{\mathrm{ext}}
=
A(x,\tau,z)
\begin{bmatrix}
\bar u\\
\omega
\end{bmatrix}
+
\beta(x,\tau,z),
\end{equation}
where
\begin{equation}\label{eq:A_def_dyn}
A(x,\tau,z)
=
\begin{bmatrix}
\dd h_1(x)[\tau] & z\,\dd h_1(x)[n(x)\times \tau]\\[1mm]
\dd h_2(x)[\tau] & z\,\dd h_2(x)[n(x)\times \tau]
\end{bmatrix},
\end{equation}
and
\begin{equation}\label{eq:beta_def_dyn}
\beta(x,\tau,z)
=
z^2
\begin{bmatrix}
\Hess h_1(x)(\tau,\tau)\\[1mm]
\Hess h_2(x)(\tau,\tau)
\end{bmatrix}.
\end{equation}
\end{proposition}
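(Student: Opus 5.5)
The plan mirrors the proof of Proposition~\ref{prop:rel_deg_two}; the only new ingredient is the time-varying speed \(z\). First, \eqref{eq:yext_dot_i} follows from the chain rule, \(\dot h_i(t)=\dd h_i(x(t))[\dot x(t)]\). Substituting \(\dot x=z\tau\) from \eqref{eq:ext_dyn_pf} gives \(\dot h_i = z\,\dd h_i(x)[\tau]\) for \(i=1,2\).

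Next, I will differentiate \eqref{eq:yext_dot_i} using the product rule. This produces two terms. The first is \(\dot z\,\dd h_i(x)[\tau]=\bar u\,\dd h_i(x)[\tau]\). The second is \(z\,\frac{d}{dt}\big(\dd h_i(x(t))[\tau(t)]\big)\). For the second term I will apply the curve identity \eqref{eq:hessian_curve_identity} with \(v(t)=\tau(t)\), which is a tangent vector field along \(x(\cdot)\) because \((x,\tau)\in\mathsf{Q}\). This yields \(\Hess h_i(x)(\dot x,\tau)+\dd h_i(x)[\nabla_{\dot x}\tau]\). Bilinearity then gives \(\Hess h_i(x)(z\tau,\tau)=z\,\Hess h_i(x)(\tau,\tau)\), so multiplying by the outer factor \(z\) produces the \(z^2\) term.

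The main step is computing \(\nabla_{\dot x}\tau\). For a submanifold of \(\R^3\) with the induced metric, the Levi--Civita covariant derivative along a curve is the orthogonal projection of the ambient derivative \(\dot\tau\) onto \(\mathsf{T}_x\mathsf{M}\), i.e.\ \(\nabla_{\dot x}\tau=\dot\tau-\langle\dot\tau,n(x)\rangle n(x)\). The second equation of \eqref{eq:ext_dyn_pf} writes \(\dot\tau\) as \(\omega\sigma\) plus a multiple of \(n(x)\), where \(\sigma=n(x)\times\tau\) is tangent. The projection therefore removes the normal component, leaving \(\nabla_{\dot x}\tau=\omega\sigma\); this is exactly the step used in Proposition~\ref{prop:rel_deg_two}. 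Combining the pieces gives
\[
\ddot h_i=\dd h_i(x)[\tau]\,\bar u+z^2\Hess h_i(x)(\tau,\tau)+z\omega\,\dd h_i(x)[\sigma],
\]
which is \eqref{eq:yext_ddot_i}.

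Finally, stacking \eqref{eq:yext_ddot_i} for \(i=1,2\) and collecting the coefficients of \(\bar u\) and \(\omega\) gives the matrix \(A\) in \eqref{eq:A_def_dyn}. The input-free terms form \(\beta\) in \eqref{eq:beta_def_dyn}, which establishes \eqref{eq:yext_ddot_vec}. The only point needing care is the regularity of \(h_i\) required for \eqref{eq:hessian_curve_identity}; it is guaranteed because \(h_i\) is \(C^r\) with \(r\ge 2\).
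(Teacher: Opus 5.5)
Your proposal is correct and follows essentially the same route as the paper. Both apply the chain rule with \(\dot x=z\tau\), then the product rule with \(\dot z=\bar u\), then the curve identity \eqref{eq:hessian_curve_identity}, then the tangential-projection fact \(\nabla_{\dot x}\tau=\omega\sigma\), and finally stack the two components to read off \(A\) and \(\beta\).
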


\begin{proof}
For each \(i\in\{1,2\}\), \eqref{eq:yext_dot_i} follows from the chain rule and \(\dot x=z\tau\) in \eqref{eq:ext_dyn_pf}. Differentiating \eqref{eq:yext_dot_i} gives
\[
\ddot h_i(t)
=
\dot z(t)\,\dd h_i(x(t))[\tau(t)]
+
z(t)\frac{d}{dt}\big(\dd h_i(x(t))[\tau(t)]\big).
\]
Applying \eqref{eq:hessian_curve_identity}, we obtain
\begin{align}
\frac{d}{dt}\big(\dd h_i(x(t))[\tau(t)]\big)
&=
\Hess h_i(x(t))\big(\dot x(t),\tau(t)\big)\\
&+
\dd h_i(x(t))\big[\nabla_{\dot x(t)}\tau(t)\big].
\end{align}
Since the tangential component of \(\dot\tau(t)\) is \(\omega(t)\sigma(t)\), one has
\(
\nabla_{\dot x(t)}\tau(t)=\omega(t)\sigma(t).
\)
Substituting \(\dot x(t)=z(t)\tau(t)\) and \(\dot z(t)=\bar u(t)\) yields \eqref{eq:yext_ddot_i}. Equation \eqref{eq:yext_ddot_vec} follows by stacking the two components.
\end{proof}

We call \(A(x,\tau,z)\) the decoupling matrix of the extended system.

\begin{lemma}\label{lem:rel_deg_22}
For every \((x,\tau,z)\in\mathsf{Q}\times\R\) with \(x\in\mathcal{O}_h\), the matrix \(A(x,\tau,z)\) is invertible if and only if \(z\neq 0\). Equivalently, on \(\mathcal{X}\), the extended system \eqref{eq:ext_dyn_pf} has vector relative degree \(\{2,2\}\) with respect to the output \(y_{\mathrm{ext}}=h(x)\).
\end{lemma}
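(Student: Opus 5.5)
The plan is to compute \(\det A\) directly and use the orthonormal frame \(\{\tau,\sigma\}\) of \(\mathsf{T}_x\mathsf{M}\), with \(\sigma=n(x)\times\tau\). Pulling the common factor \(z\) out of the second column of \eqref{eq:A_def_dyn} gives
\[
\det A(x,\tau,z)=z\,\Big(\dd h_1(x)[\tau]\,\dd h_2(x)[\sigma]-\dd h_2(x)[\tau]\,\dd h_1(x)[\sigma]\Big).
\]
The bracket is the determinant of the matrix of the linear map \(\dd h(x):\mathsf{T}_x\mathsf{M}\to\R^2\) written in the basis \(\{\tau,\sigma\}\). Its columns are \(\dd h(x)[\tau]\) and \(\dd h(x)[\sigma]\).

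For \(x\in\mathcal{O}_h\), the definition \eqref{eq:Sigma_h_pf} says that \(\dd h(x)\) has rank \(2\). It is therefore a linear isomorphism between two-dimensional spaces, so its matrix in any basis of \(\mathsf{T}_x\mathsf{M}\) is nonsingular. Since \(\{\tau,\sigma\}\) is such a basis for every \((x,\tau)\in\mathsf{Q}\), the bracket is nonzero. Hence \(\det A=0\) if and only if \(z=0\).

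To make this concrete, one can write \(\grad h_i(x)=\alpha_i\tau+\beta_i\sigma\). The bracket then equals \(\alpha_1\beta_2-\alpha_2\beta_1\), which is nonzero because \(\grad h_1(x)\) and \(\grad h_2(x)\) are linearly independent in \(\mathsf{T}_x\mathsf{M}\) for \(x\in\mathcal{O}_h\). The case \(z=0\) is immediate: the second column of \(A\) vanishes, so \(A\) is singular.

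For the relative-degree claim, I would use Proposition~\ref{prop:dyn_ext_io}. By \eqref{eq:yext_dot_i}, the first derivatives \(\dot h_i\) do not depend on the inputs \((\bar u,\omega)\). By \eqref{eq:yext_ddot_vec}, the second derivatives are affine in the inputs with coefficient matrix \(A\). Vector relative degree \(\{2,2\}\) on \(\mathcal{X}\) therefore amounts exactly to \(A\) being nonsingular there, and this holds by the first part since \(z\neq0\) and \(x\in\mathcal{O}_h\) on \(\mathcal{X}\).

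The only step that needs care is identifying the bracket with a basis representation of \(\dd h(x)\). This requires that \(\sigma\) be tangent to \(\mathsf{M}\) and that \(\{\tau,\sigma\}\) be a basis of \(\mathsf{T}_x\mathsf{M}\), both of which were established in Section~\ref{sec:modeling}.
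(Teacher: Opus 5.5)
Your proposal is correct and follows the paper's proof essentially step for step: you factor \(z\) out of the second column of \(A\), identify the remaining determinant as the matrix of the isomorphism \(\dd h(x)\) in the basis \(\{\tau,\sigma\}\) (hence nonzero on \(\mathcal{O}_h\)), and read off the relative-degree claim from Proposition~\ref{prop:dyn_ext_io}. Your explicit coordinate check via \(\grad h_i=\alpha_i\tau+\beta_i\sigma\), and your remark that the second column vanishes when \(z=0\), are harmless extras that make the ``if and only if'' slightly more transparent.
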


\begin{proof}
Fix \((x,\tau,z)\in\mathsf{Q}\times\R\) with \(x\in\mathcal{O}_h\), and let \(\sigma:=n(x)\times \tau\). Since \(\{\tau,\sigma\}\) is a basis of \(\mathsf{T}_x\mathsf{M}\), one has
\[
\det A(x,\tau,z)
=
z\,
\det
\begin{bmatrix}
\dd h_1(x)[\tau] & \dd h_1(x)[\sigma]\\[1mm]
\dd h_2(x)[\tau] & \dd h_2(x)[\sigma]
\end{bmatrix}.
\]
Because \(x\in\mathcal{O}_h\), the map
\(
\dd h(x):\mathsf{T}_x\mathsf{M}\to\R^2
\)
is a linear isomorphism. Hence the two vectors
\(
\dd h(x)[\tau],
\text{ and }
\dd h(x)[\sigma]
\)
form a basis of \(\R^2\), and therefore the determinant above is nonzero. It follows that \(\det A(x,\tau,z)\neq 0\) if and only if \(z\neq 0\). The relative-degree statement follows from \eqref{eq:yext_derivatives}.
\end{proof}

The next lemma shows that, on \(\mathcal{X}\), the outputs and their first derivatives define local coordinates.

\begin{lemma}\label{lem:Phi_ext_diffeo}
Define $\Phi_{\mathrm{ext}}:\mathcal{X}\to\R^4$ by
\[
\Phi_{\mathrm{ext}}(x,\tau,z)
:=
\begin{bmatrix}
h_1(x)\\[1mm]
\dot h_1(x,\tau,z)\\[1mm]
h_2(x)\\[1mm]
\dot h_2(x,\tau,z)
\end{bmatrix}
=
\begin{bmatrix}
h_1(x)\\[1mm]
z\,\dd h_1(x)[\tau]\\[1mm]
h_2(x)\\[1mm]
z\,\dd h_2(x)[\tau]
\end{bmatrix}.
\]
Then \(\Phi_{\mathrm{ext}}\) is a local diffeomorphism on \(\mathcal{X}\). In particular, for every \(\bar q\in\mathsf{\Gamma}_{\mathrm{ext}}\), there exists an open neighborhood \(\mathcal{W}\subset\mathcal{X}\) of \(\bar q\) such that, in the local coordinates
\[
\chi=\Phi_{\mathrm{ext}}(x,\tau,z)=
(\chi_1, \chi_2,\chi_3,\chi_4
)^\top,
\]
the manifold \(\mathsf{\Gamma}_{\mathrm{ext}}\cap\mathcal{W}\) is represented by
\(
\mathsf{\Gamma}_{\mathrm{ext}}\cap\mathcal{W}
=
\{q\in\mathcal{W}:\chi_1=0,\ \chi_2=0\}.
\)
\end{lemma}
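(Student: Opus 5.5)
Since \(\mathcal{X}\) is an open subset of the four-dimensional manifold \(\mathsf{Q}\times\R\) (recall \(\dim\mathsf{Q}=3\)), and \(\Phi_{\mathrm{ext}}\) maps into \(\R^4\), the plan is to invoke the inverse function theorem: it suffices to show that \(\dd\Phi_{\mathrm{ext}}(q)\) is injective at every \(q=(x,\tau,z)\in\mathcal{X}\). Once that holds, every point has a neighborhood \(\mathcal W\) on which \(\Phi_{\mathrm{ext}}\) is a diffeomorphism onto its image. The representation of \(\mathsf{\Gamma}_{\mathrm{ext}}\cap\mathcal W\) then follows directly from \eqref{eq:Gammaext_h1}, since \(\chi_1=h_1\) and \(\chi_2=\dot h_1\).

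To check injectivity, I will exhibit a basis of \(\mathsf{T}_q(\mathsf{Q}\times\R)\) and compute its image explicitly. Let \(\sigma:=n(x)\times\tau\). The candidate basis is:
\begin{itemize}
\item \(G:=(0,\sigma,0)\), the fibre rotation, which is tangent by the curve \(s\mapsto(x,\cos s\,\tau+\sin s\,\sigma,z)\) used in Lemma~\ref{lem:Gamma_local_coords};
\item \(Z:=(0,0,1)\), the \(z\)-direction;
\item two horizontal lifts \(X_\tau,X_\sigma\) of \(\tau,\sigma\in\mathsf{T}_x\mathsf{M}\), obtained by parallel transport of \(\tau\) along curves through \(x\) with these initial velocities, with \(z\) held constant.
\end{itemize}
Together these four vectors span the tangent space.

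The images of the vertical directions are immediate. For \(G\), one gets \(\dd\Phi_{\mathrm{ext}}[G]=(0,z\,\dd h_1(x)[\sigma],0,z\,\dd h_2(x)[\sigma])\). For \(Z\), one gets \((0,\dd h_1(x)[\tau],0,\dd h_2(x)[\tau])\).

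For the horizontal lifts, identity \eqref{eq:hessian_curve_identity} with \(\nabla v=0\) gives the image of \(X_w\) (\(w\in\{\tau,\sigma\}\)) as
\[
\bigl(\dd h_1(x)[w],\ z\Hess h_1(x)(w,\tau),\ \dd h_2(x)[w],\ z\Hess h_2(x)(w,\tau)\bigr).
\]

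Order the coordinates as \((\chi_1,\chi_3\,|\,\chi_2,\chi_4)\). In this ordering the Jacobian with respect to \((X_\tau,X_\sigma\,|\,Z,G)\) is block lower-triangular. Its upper-left block is \([\dd h(x)[\tau]\ \ \dd h(x)[\sigma]]\), which is invertible because \(x\in\mathcal{O}_h\). Its lower-right block is exactly \(A(x,\tau,z)\) from \eqref{eq:A_def_dyn}, which is invertible by Lemma~\ref{lem:rel_deg_22} since \(z\neq0\). Hence the full determinant is nonzero everywhere on \(\mathcal{X}\).

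The main obstacle I expect is making the horizontal-lift step rigorous in the embedded setting. Specifically, I must justify that the lifted curves stay in \(\mathsf{Q}\) (unit length and tangency are preserved by parallel transport). I also need the regularity of \(\Phi_{\mathrm{ext}}\): it is \(C^{r-1}\) since it involves \(\dd h_i\), so with \(r\ge2\) the inverse function theorem applies. If the parallel-transport construction becomes cumbersome, I will fall back on any smooth extension: choose curves \((x(s),\tau(s))\in\mathsf{Q}\) with \(\dot x(0)=w\). Any extra \(\nabla\tau\) term this introduces is a multiple of \(\sigma\), so it only adds a multiple of the \(G\)-column to the lower block. This preserves the triangular structure and the determinant argument.
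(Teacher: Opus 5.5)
Your proof is correct, but it takes a different route from the paper's. The paper does no Jacobian computation. It combines Lemma~\ref{lem:rel_deg_22} (vector relative degree $\{2,2\}$ on $\mathcal{X}$) with $\dim(\mathsf{Q}\times\R)=4=2+2$. It then cites the standard normal-form theorem for full vector relative degree to conclude that the outputs and their first derivatives are local coordinates. Finally it reads off $\mathsf{\Gamma}_{\mathrm{ext}}\cap\mathcal W=\{\chi_1=\chi_2=0\}$ from \eqref{eq:Gammaext_h1}, exactly as you do.

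You instead check directly that $\dd\Phi_{\mathrm{ext}}$ is invertible. Your columns $Z$ and $G$ are exactly the input vector fields of \eqref{eq:ext_dyn_pf} multiplying $\bar u$ and $\omega$. So your zero upper-right block says the inputs do not appear in $\dot h_i$, and your lower-right block is the decoupling matrix $A$. In effect you reprove the standard lemma for this system, completing the frame with horizontal lifts rather than with the brackets $[f,g_j]$ of the drift with the input fields. That trades a Lie-bracket computation for the invertibility of $\dd h(x)$ on $\mathsf{T}_x\mathsf{M}$, which is already encoded in $\mathcal{O}_h$.

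The paper's route is shorter, but it hands everything to a theorem usually stated for smooth systems in local coordinates. Yours is self-contained and coordinate-free, shows exactly where $x\in\mathcal{O}_h$ and $z\neq0$ are used, and needs only $\Phi_{\mathrm{ext}}\in C^{r-1}\subset C^1$ for the inverse function theorem.

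Your worry about the horizontal lifts is easily settled; no parallel transport is needed. Take $X_w:=\bigl(w,\,-\langle\tau,\dd n(x)[w]\rangle\,n(x),\,0\bigr)$. It satisfies the linearized constraints defining $\mathsf{T}_{(x,\tau)}\mathsf{Q}$, and its $\tau$-component has zero tangential part. Applying \eqref{eq:hessian_curve_identity} along any curve with this initial velocity then gives your formula. As you note, for any other lift $\nabla_w\tau$ is tangent and orthogonal to $\tau$, hence a multiple of $\sigma$, so it only adds a multiple of the $G$-column.
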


\begin{proof}
By Lemma~\ref{lem:rel_deg_22}, the extended system \eqref{eq:ext_dyn_pf} has vector relative degree \(\{2,2\}\) on \(\mathcal{X}\). Since \(\dim\mathsf{M}=2\), one has \(\dim\mathsf{Q}=3\), and therefore
\(
\dim(\mathsf{Q}\times\R)=4=2+2.
\)
Thus, the system has full vector relative degree on \(\mathcal{X}\). By the standard exact-feedback-linearization normal-form theorem, the map formed by the outputs and their derivatives up to order \(r_i-1\), i.e., \(\Phi_{\mathrm{ext}}\), is a local diffeomorphism on \(\mathcal{X}\).
Moreover, by~\eqref{eq:Gammaext_h1},
\[
\mathsf{\Gamma}_{\mathrm{ext}}
=
\{(x,\tau,z)\in\mathcal{X}:h_1(x)=0,\ \dot h_1(x,\tau,z)=0\}.
\]
Hence, in the coordinates induced by \(\Phi_{\mathrm{ext}}\), the manifold \(\mathsf{\Gamma}_{\mathrm{ext}}\) is locally given by the equations \(\chi_1=0\) and \(\chi_2=0\).
\end{proof}

On \(\mathcal{X}\), Lemma~\ref{lem:rel_deg_22} allows us to define the
feedback law
\begin{equation}\label{eq:u_dyn_ext}
\begin{bmatrix}
\bar u\\
\omega
\end{bmatrix}
=
A(x,\tau,z)^{-1}\big(-\,\beta(x,\tau,z)+\mu\big),
\end{equation}
where \(\mu=(\mu_1,\mu_2)^\top\in\R^2\) is an auxiliary input.
Substituting \eqref{eq:u_dyn_ext} into \eqref{eq:yext_ddot_vec} yields
\(\ddot y_{\mathrm{ext}}=\mu\), i.e.,
\(\ddot h_1=\mu_1\) and \(\ddot h_2=\mu_2\).
To stabilize \(\mathsf{\Gamma}_{\mathrm{ext}}\), choose
\begin{equation}\label{eq:mu1_def}
\mu_1=-k_{11}h_1-k_{12}\dot h_1,
\qquad
k_{11},k_{12}>0,
\end{equation}
which gives
\(\ddot h_1+k_{12}\dot h_1+k_{11}h_1=0\).
We next specify \(\mu_2\) to shape the evolution of the longitudinal
coordinate \(h_2\). Recall the longitudinal error
\(e_2=\dot h_2-\upsilon_d(h_2)\). Its derivative satisfies
\(\dot e_2=\ddot h_2-\upsilon_d'(h_2)\dot h_2\). Choose
\begin{equation}\label{eq:mu2_def}
\mu_2=
\upsilon_d'(h_2)\dot h_2-k_2e_2,
\qquad
k_2>0.
\end{equation}
Then \(\dot e_2=-k_2e_2\), and hence \(\dot h_2\) tracks the prescribed
longitudinal coordinate-rate law \(\upsilon_d(h_2)\) exponentially.

The preceding construction is summarized as follows.

\begin{theorem}\label{thm:dyn_fdbk}
Consider the extended system \eqref{eq:ext_dyn_pf} on \(\mathcal{X}\), and apply the feedback law \eqref{eq:u_dyn_ext} with auxiliary input \(\mu\) given by \eqref{eq:mu1_def} and \eqref{eq:mu2_def}. Then the following statements hold for the resulting closed-loop system: 
\begin{enumerate}[label=(\roman*)]
\item the manifold \(\mathsf{\Gamma}_{\mathrm{ext}}\) is forward invariant and locally exponentially stable in \(\mathcal{X}\);

\item the longitudinal error \(e_2\) converges to zero exponentially;

\item if a closed-loop trajectory starts on \(\mathsf{\Gamma}_{\mathrm{ext}}\), then its position remains on \(\mathsf{C}\) for all future time for which the solution remains in \(\mathcal{X}\).
\end{enumerate}
\end{theorem}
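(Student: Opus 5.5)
The plan is to work in the local coordinates $\chi=\Phi_{\mathrm{ext}}(x,\tau,z)$ of Lemma~\ref{lem:Phi_ext_diffeo} and read the closed loop as a partially linear system. On $\mathcal X$, Lemma~\ref{lem:rel_deg_22} guarantees that $A(x,\tau,z)$ is invertible, so the feedback \eqref{eq:u_dyn_ext} is well defined and locally Lipschitz. Proposition~\ref{prop:dyn_ext_io} then gives $\ddot h_1=\mu_1$ and $\ddot h_2=\mu_2$ along every closed-loop trajectory that stays in $\mathcal X$. With \eqref{eq:mu1_def}, the pair $(\chi_1,\chi_2)=(h_1,\dot h_1)$ obeys the autonomous linear system
\[
\dot\chi_1=\chi_2,\qquad \dot\chi_2=-k_{11}\chi_1-k_{12}\chi_2 .
\]
This system does not depend on $(\chi_3,\chi_4)$, and its matrix is Hurwitz because $k_{11},k_{12}>0$. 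With \eqref{eq:mu2_def}, the longitudinal error satisfies $\dot e_2=-k_2e_2$.

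For statement~(i), forward invariance follows from uniqueness of solutions of the linear $(\chi_1,\chi_2)$-subsystem. If $\chi_1(0)=\chi_2(0)=0$, then $\chi_1(t)=\chi_2(t)=0$ for as long as the solution exists in $\mathcal X$. By \eqref{eq:Gammaext_h1}, this is exactly membership in $\mathsf{\Gamma}_{\mathrm{ext}}$.

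For local exponential stability, I would argue as in Theorem~\ref{thm:fixed_v}. The function $\Psi_{\mathrm{ext}}:=(h_1,\dot h_1)$ vanishes exactly on $\mathsf{\Gamma}_{\mathrm{ext}}$. By Lemma~\ref{lem:Phi_ext_diffeo} it is part of a local chart, so near each point $\bar q\in\mathsf{\Gamma}_{\mathrm{ext}}$ the quantity $\|\Psi_{\mathrm{ext}}(q)\|$ is comparable to $\operatorname{dist}(q,\mathsf{\Gamma}_{\mathrm{ext}})$. The explicit bound $\|(\chi_1,\chi_2)(t)\|\le c\,e^{-\lambda t}\|(\chi_1,\chi_2)(0)\|$ then transfers to an exponential estimate on the distance to $\mathsf{\Gamma}_{\mathrm{ext}}$.

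Statement~(ii) is immediate: $e_2(t)=e^{-k_2t}e_2(0)$ holds whenever the solution remains in $\mathcal X$.

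Statement~(iii) follows from invariance. A trajectory that starts on $\mathsf{\Gamma}_{\mathrm{ext}}$ has $h_1(x(t))=0$ for all such $t$, so $x(t)\in\mathsf{C}$. Equivalently, $h_1(0)=\dot h_1(0)=0$ together with the linear ODE forces $h_1\equiv 0$.

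The main obstacle is the word ``locally'' in~(i). The estimates above hold only while the trajectory stays inside $\mathcal X$, and the comparison between $\|\Psi_{\mathrm{ext}}\|$ and the distance to $\mathsf{\Gamma}_{\mathrm{ext}}$ holds only inside chart domains. Meanwhile the longitudinal coordinates $(\chi_3,\chi_4)$ evolve, and they could drive the state toward $\Sigma_h$ or toward $z=0$. I would address this in two steps.

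First, I would note that $\dot h_2=z\,\dd h_2(x)[\tau]$ and that $e_2\to0$ keeps $\dot h_2$ near $\upsilon_d(h_2)$, whose magnitude is at least $v_{\min}$ by \eqref{eq:upsilon_d_assump_pf}. After a transient $|\dot h_2|$ stays bounded away from zero, and this forces $z$ to stay bounded away from zero. The issue is therefore reduced to the singular set $\Sigma_h$.

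Second, I would interpret the stability claim as holding relative to $\mathcal X$: for initial conditions close enough to $\mathsf{\Gamma}_{\mathrm{ext}}$, the transverse decay holds on the maximal interval of existence in $\mathcal X$. This matches the qualifier ``for which the solution remains in $\mathcal X$'' in~(iii). The detailed argument for the first step is the part I expect to require the most care.
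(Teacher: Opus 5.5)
Your core argument matches the paper's proof. Under \eqref{eq:u_dyn_ext}--\eqref{eq:mu2_def} one has $\ddot h_1+k_{12}\dot h_1+k_{11}h_1=0$ and $\dot e_2=-k_2e_2$. Lemma~\ref{lem:Phi_ext_diffeo} together with \eqref{eq:Gammaext_h1} makes $(h_1,\dot h_1)$ transverse coordinates, so the argument of Theorem~\ref{thm:fixed_v} gives (i), and invariance gives (iii). The paper reads all of these claims on the maximal interval on which the solution stays in $\mathcal X$. That is exactly your second step, and the proof uses nothing beyond it.

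Drop your first step, however, because it is false as stated. Along closed-loop solutions $\dot h_2(t)=\upsilon_d(h_2(t))+e_2(0)e^{-k_2t}$, and ``after a transient'' says nothing about what happens \emph{during} the transient. Since $\mathcal X$ allows either sign of $z$, $\mathsf{\Gamma}_{\mathrm{ext}}$ contains points where $\dot h_2=z\,\dd h_2(x)[\tau]$ has the opposite sign to $\upsilon_d$. (Note that $\upsilon_d$ has constant sign, being continuous with $|\upsilon_d|\ge v_{\min}$.) From such a point $\dot h_2$ must cross zero. The solution stays on $\mathsf{\Gamma}_{\mathrm{ext}}$, where $\dd h_2(x)[\tau]\neq0$, so that crossing means $z=0$. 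Hence these solutions cannot remain in $\mathcal X$ for all $t\ge0$, and closeness to $\mathsf{\Gamma}_{\mathrm{ext}}$ does not reduce the difficulty to $\Sigma_h$. The correct statement is the Remark after Theorem~\ref{thm:dyn_fdbk}. It additionally assumes $|e_2(0)|<v_{\min}$, which yields $|\dot h_2(t)|\ge v_{\min}-|e_2(0)|>0$ and hence $z(t)\neq0$ throughout the existence interval.

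Two minor points:
\begin{enumerate}
\item Neither of your steps addresses the chart-domain part of the obstacle you raised. The paper does not address it either; it simply defers to Theorem~\ref{thm:fixed_v}.
\item Under the stated regularity ($r=2$ is allowed and $\upsilon_d$ is only $C^1$), the closed loop is in general only continuous, since $\Hess h_i$, $\dd n$ and $\upsilon_d'$ are merely $C^0$. So do not claim local Lipschitzness. Your invariance argument does not need it, because it only uses uniqueness for the linear $(h_1,\dot h_1)$ equation along each solution.
\end{enumerate}
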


\begin{proof}
Under \eqref{eq:u_dyn_ext}--\eqref{eq:mu2_def}, the closed-loop output
dynamics satisfy
\(
\ddot h_1+k_{12}\dot h_1+k_{11}h_1=0,
\;\;
\dot e_2=-k_2e_2.
\)
Hence \(e_2(t)=e_2(0)e^{-k_2t}\), which proves~(ii).
Moreover, by \eqref{eq:Gammaext_h1} and
Lemma~\ref{lem:Phi_ext_diffeo},
\((h_1,\dot h_1)\) defines local transverse coordinates to
\(\mathsf{\Gamma}_{\mathrm{ext}}\). Since the first equation above is
exponentially stable, the same transverse-stability argument as in the
proof of Theorem~\ref{thm:fixed_v} shows that
\(\mathsf{\Gamma}_{\mathrm{ext}}\) is forward invariant and locally
exponentially stable, proving~(i).
Forward invariance of
\(\mathsf{\Gamma}_{\mathrm{ext}}\) implies \(h_1(x(t))=0\), and hence
\(x(t)\in\mathsf{C}\), proving~(iii).
\end{proof}

\begin{remark}
    For every
\(\bar q=(\bar x,\bar\tau,\bar z)\in\mathsf{\Gamma}_{\mathrm{ext}}\),
one has \(\dd h_2(\bar x)[\bar\tau]\neq0\). Hence, by continuity,
there exists a neighborhood \(\mathcal W\subset\mathcal X\) of
\(\bar q\) on which \(\dd h_2(x)[\tau]\neq0\). Moreover,
\[
z(t)=
\frac{\upsilon_d(h_2(t))+e_2(0)e^{-k_2t}}
{\dd h_2(x(t))[\tau(t)]}.
\]
Since \(|\upsilon_d|\ge v_{\min}>0\),
the condition \(|e_2(0)|<v_{\min}\) guarantees \(z(t)\neq0\)
for all $t$ for which the trajectory remains in \(\mathcal W\).
Therefore, the singular set \(\{z=0\}\) is locally avoided.
\end{remark}

\section{Simulations}

We illustrate the variable-speed controller of
Section~\ref{sec:dyn_fdbk}; additional results, including the fixed-speed
case, detailed plots, animations, and source code are available on our
project webpage\footnote{\url{https://gradslab.github.io/UnicycleOnManifolds/}}
and GitHub\footnote{\url{https://github.com/gradslab/UnicycleOnManifolds}}.
We consider the sphere
\(\phi(x)=\norm{x}^2-4\), with
\(
h_1(x)=3x_1^2+x_2^2-3.24
\)
and local azimuthal coordinate
\(
h_2(x)=\operatorname{atan2}(x_2,x_1).
\)
The desired longitudinal coordinate rate is
\(\upsilon_d\equiv-\SI{0.5}{\radian\per\second}\).
The initial conditions are
\(x_0=[-1,-1.4,1.0198]^\top\),
\(\tau_0=[-0.8595,0.329,-0.3910]^\top\), and \(z(0)=0.5\),
with \(k_{11}=10\), \(k_{12}=5\), and \(k_2=0.5\).
Fig.~\ref{fig:dyn} illustrates the closed-loop response. Motion snapshots
are shown in Fig.~\ref{fig:snapshots_dyn}, while
Fig.~\ref{fig:output_dyn} shows the output signals. The third trace is the
shifted error \(1+e_2\), where
\(e_2=\dot h_2-\upsilon_d=\dot h_2+0.5\); hence its convergence to \(1\)
is equivalent to
\(\dot h_2\to-\SI{0.5}{\radian\per\second}\).

\begin{figure}[t]
    \centering
    \begin{subfigure}[b]{0.48\linewidth}
        \centering
        \includegraphics[width=\linewidth]{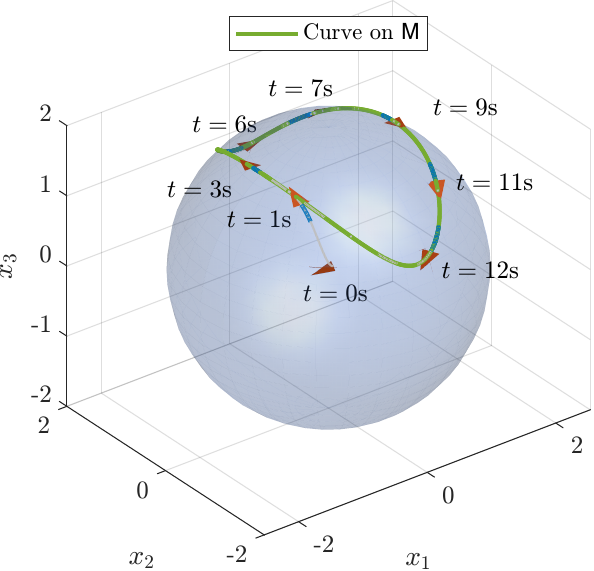}
        \caption{Sequential position and orientation.}
        \label{fig:snapshots_dyn}
    \end{subfigure}
    \hfill
    \begin{subfigure}[b]{0.48\linewidth}
        \centering
        \includegraphics[width=\linewidth]{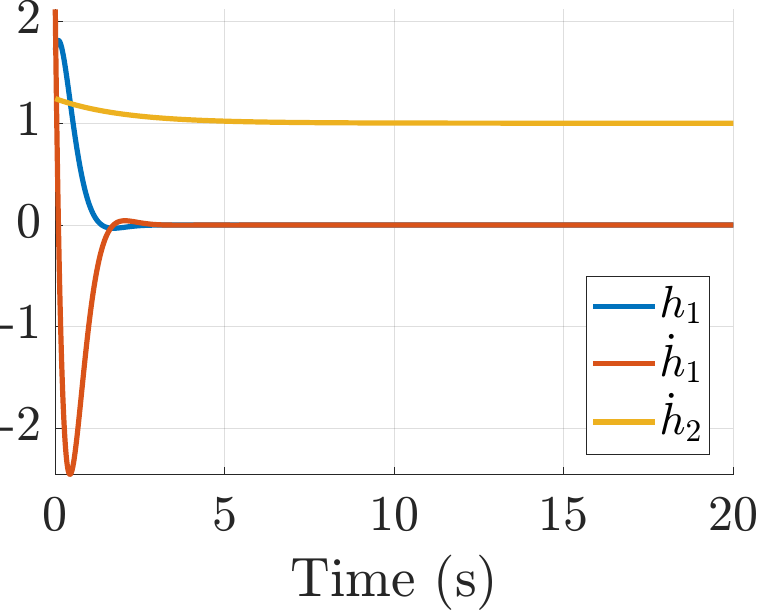}
        \caption{Output signals showing exponential convergence.}
        \label{fig:output_dyn}
    \end{subfigure}

\caption{Variable-speed path following on
\(\mathsf{M}=\{x\in\R^3:\norm{x}^2-4=0\}\).
In (b), \(1+e_2\), with \(e_2=\dot h_2-\upsilon_d\), converges to \(1\).}
    \label{fig:dyn}
\end{figure}

\section{Conclusion}
This paper developed a geometric framework for path following of a
kinematic unicycle evolving on a smooth embedded manifold. By lifting the
target path to the unit tangent bundle, path following was formulated as
stabilization of the resulting path-following manifold. For fixed
translational speed, a static feedback law renders this manifold forward
invariant and locally exponentially stable. For variable speed, a dynamic
extension renders the corresponding extended manifold forward invariant
and locally exponentially stable while exponentially regulating a
prescribed longitudinal coordinate-rate law on the admissible regularity
domain. A numerical simulation illustrates the variable-speed controller.



\bibliographystyle{IEEEtran}
\bibliography{cleanbib}

\end{document}